\documentclass{article}
\usepackage[utf8]{inputenc}
\usepackage{subfigure}
\usepackage{amsmath}
\usepackage{amsthm}
\usepackage{amssymb}
\usepackage{tablefootnote}
\usepackage{hyperref}
\usepackage[ruled,vlined]{algorithm2e}
\newtheorem{prop}{Proposition}[section]

\newtheorem{thm}[prop]{Theorem}

\theoremstyle{definition}

\theoremstyle{definition}
\newtheorem{eg}[prop]{Example}

 \def \tss {\textsuperscript}    \def \s {\sigma}            \def \Av {\text{Av}} \def \es {\emptyset}

\author{Yonah Biers-Ariel}
\title{Flexible Schemes for Pattern-Avoiding Permutations}

\begin{document}\sloppy
\maketitle

\begin{abstract}
We modify the enumeration schemes of Zeilberger and Vatter so that they can efficiently enumerate many new classes of pattern-avoiding permutations including all such classes with a regular insertion encoding.
\end{abstract}

\section{Introduction} \label{intro}
\subsection{Automatic Enumeration of Permutation Patterns}
The study of pattern-avoiding permutations dates back more than a century to MacMahon \cite{MacMahon}, but expanded significantly after Knuth addressed it in \textit{The Art of Computer Programming} \cite{Knuth}. In the last twenty years, an increasing amount of research has focused on the automatic enumeration of permutation classes. Rather than the traditional approach of considering a single avoidance class (or family of avoidance classes), these algorithms seek to input any set of patterns and then (hopefully) return some certificate which gives a polynomial-time algorithm to generate the enumeration sequence of the permutations which avoid these patterns.

The first of these were enumeration schemes, developed by Zeilberger in \cite{ZSchemes} and expanded by Vatter in \cite{VSchemes}. The idea is to separate permutations into groups by their prefixes and then attempt to reduce these classes to simpler groups by deleting prefix elements. These schemes were extended to words by Pudwell in \cite{Pudwell} and to dashed patterns by Baxter and Pudwell in \cite{Baxter}, although this paper is concerned only with the traditional permutation pattern case. Vatter's enumeration schemes are described in greater detail in section \ref{schemes}.

The other enumeration algorithm which we are interested in is the insertion encoding developed by Albert, Linton, and Ru\v{s}kuc in \cite{Ins}. The strategy here is to consider a finite automaton which moves between states, each state specifying which elements are already included in the permutation and where new elements will be inserted. It is described in greater detail in section \ref{insenc}.

Both of these algorithms, along with the new enumeration scheme that we propose in this paper, follow the paradigm of building a permutation element by element and finding rules to reduce partially-completed permutations to simpler ones. Another paradigm is illustrated by Bean, Gudmundsson, and Ulfarsson in \cite{struct}, where they decompose pattern-avoiding permutations into component structures which can more easily be counted. Here, though, we are only concerned with the first paradigm.

The object of this paper is to partially answer a question posed by Vatter in \cite{VSchemes}: Is there an automatic enumeration algorithm that applies to all permutation classes with finite enumeration schemes, all classes with regular insertion encodings, and all classes with only finitely many simple permutations? Here we provide an algorithm to produce what we will call flexible schemes. Finite flexible schemes exist for every class with a finite enumeration scheme or a regular insertion encoding (and many classes with neither). Like traditional schemes, flexible schemes do not provide generating functions -- indeed, they exist for avoidance classes which are believed not to have any D-finite generating function \cite{no_gf} -- but they do provide polynomial-time enumeration of their permutation classes. 

In the remainder of Section \ref{intro}, we present traditional enumeration schemes and the insertion encoding. In Section \ref{flexible}, we describe how flexible schemes differ from traditional ones, and present an automatic way to find them. In Section \ref{conditions} we show that a finite flexible scheme exists whenever either a finite traditional scheme or regular insertion encoding does. Finally, in section \ref{results}, we present some of the successes we have had on specific avoidance classes.

\subsection{Definitions}
In general, we use standard definitions for permutation patterns and related objects. A \emph{permutation} is some reordering of the numbers in $[n] = \{1,2,\dots,n\}$. Permutations are either written as a string of ordered values (like 24513), and the empty permutation is written as $\es$. A string $s$ of $k$ unique elements of $[n]$ can be turned into a permutation by taking its \emph{reduction}, the unique permutation of length $k$ whose elements occur in the same order as $s$.

A permutation $\s = \s_1\s_2 \dots \s_n$ \emph{contains} a pattern $p = p_1p_2 \dots p_k$ if there exists a sequence $i_1, i_2, \dots, i_k$ such that $\s_{i_1}\s_{i_2}\dots \s_{i_k}$ reduces to $p$. In this case, we call $\s_{i_1}\s_{i_2}\dots \s_{i_k}$ an \emph{occurrence} of $p$ in $\s$; if there is no occurrence of $p$ in $\s$, then $\s$ \emph{avoids} $p$. The following example illustrates these definitions.

\begin{eg}
The permutation 24513 contains the pattern 132 because 253 reduces to 132 (and hence is an occurrence of 132). However, 24513 avoids 321 because it does not contain three elements in decreasing order.
\end{eg}

When discussing Zeilberger's enumeration schemes, we need to categorize permutations by their prefixes. A \emph{prefix} of a permutation is the reduction of one of the permutation's initial segments. When we turn to Vatter's enumeration schemes and then the new ones introduced in this paper, we will instead categorize permutations by their downfixes. A \emph{downfix} of a permutation consists of the permutation's elements which lie below a particular value kept in their proper order.

\begin{eg}
The permutation 24513 has six prefixes: $\es, 1, 12, 123, 2341,$ and $24513$. It also has six downfixes: $\es, 1, 21, 213, 2413,$ and $24513$.
\end{eg}

When discussing Vatter's enumerations schemes, we will also need to categorize permutations by their \emph{gap vectors}. A permutation $\s_1\s_2\dots \s_n$ with downfix $\s_{i_1}\s_{i_2} \dots \s_{i_l}$ has corresponding gap vector $[i_1 - 1, i_2 - i_1 -1, \dots, i_l-i_{l-1}-1, n - i_l]$. We are particularly interested in gap vectors whose components are larger than those of some other vector, and so we say a gap vector $\mathbf{g} = [g_1,g_2, \dots, g_{l+1}]$ \emph{satisfies} a gap condition $\mathbf{h} = [h_1,h_2,\dots, h_{l+1}]$ if $g_i \ge h_i$ for all $1 \le i \le l+1$. When $\mathbf{g}$ satisfies $\mathbf{h}$, we write $\mathbf{g} \succeq\mathbf{h}$.

\begin{eg}
Consider the permutation 24513 and its downfix 21. It has corresponding gap vector $[0,2,1]$ which satisfies $[0,1,1]$ and $[0,2,0]$, but not $[1,1,1]$. Notice if a permutation has length $n$ and its downfix has length $l$, then the corresponding gap vector must have length $l+1$, and its elements must sum to $n-l$.
\end{eg}

We will often want to talk about the set of permutations avoiding some set of patterns $B$ with downfix $\pi$ and corresponding gap vector $\mathbf{g}$. We call this set $Z(B, \pi, \mathbf{g})$. When this set is nonempty, we say that $\mathbf{g}$ is \emph{viable} for $B$ and $\pi$ and when it is empty we say that $\mathbf{g}$ is \emph{nonviable} for $B$ and $\pi$. When $B$ and $\pi$ are clear from the context, we simply say that $\mathbf{g}$ is viable or nonviable.

Sometimes, we also want to talk about the set of permutations with downfix $\pi$ and corresponding gap vector $\mathbf{g}$ without worrying about what patterns they avoid. We call this set $Y(\pi, \mathbf{g})$.

\begin{eg}
The gap vector $[2,1,0]$ is viable for pattern set $\{123\}$ and downfix 12 because 
\[Z(\{123\}, 12, [2,1,0]) = \{35142, 43152, 45132, 53142, 54132\}.\]
On the other hand, the gap vector $[0,0,1]$ is nonviable. When we don't worry about avoiding 123, we find that 
\[Y(12, [2,1,0]) = \{34152, 35142, 43152, 45132, 53142, 54132\}.\]
\end{eg}

The key operation at the heart of enumeration schemes is that of deleting a downfix element to yield a simpler permutation. When a permutation with downfix $\pi$ and gap vector $\mathbf{g}$ has the $i\tss{th}$ element of its downfix deleted, it yields a permutation with downfix $d_i(\pi)$ and gap vector $d_i(\mathbf{g})$. Precisely, we define $d_i(\pi_1\pi_2 \dots \pi_k)$ to be the reduction of $\pi_1\dots \pi_{i-1} \pi_{i+1}\dots \pi_k$, and $d_i([g_1,g_2,\dots,g_{k+1}])$ $= [g_1, \dots, g_{i-1}, g_i + g_{i+1}, g_{i+2}, \dots, g_{k+1}]$. 

We also need to consider the \emph{refinements} of $\pi$, that is all the permutations formed by inserting the element $|\pi| +1$ somewhere in $\pi$. When we refine $\pi$, we also need to change $[g_1,g_2,\dots g_{k+1}]$ by deleting 1 from $g_i$ (representing the element which is now in the downfix), inserting a new element $g'$ between $g_i$ and $g_{i+1}$, and splitting the mass of $g_i$ between it and $g'$ (representing the elements that were between the $\pi_i$ and $\pi_{i+1}$, but are now between $\pi_{i+1}$ and $\pi_{i+2}$). Precisely, we let 
\[f_i(\pi_1\pi_2\dots \pi_k) = \pi_1\dots \pi_i (k+1) \pi_{i+1} \dots \pi_k, \text{ and}\] 
\[f_{i,j}([g_1,g_2,\dots, g_{k+1}]) = [g_1,\dots, g_{i-1}, j, g_i - j -1, g_{i+1}, \dots, g_{k+1}].\]

\begin{eg}
Let $\pi = 24513$ and $\mathbf{g} = [1,2,1,2,1,2]$. Then $d_1(\pi) = 3412$ and $d_1(\mathbf{g}) = [3,1,2,1,2]$. Similarly, $f_2(\pi) = 264513$ and $f_{2,1}(\mathbf{g}) = [1,1,0,1,2,1,2]$. 
\end{eg}

\subsection{Enumeration Schemes} \label{schemes}
The history and basic idea of enumeration schemes have already been discussed; here we illustrate the schemes with an extended example due to Vatter. 
\begin{eg}
Suppose we want an algorithm to enumerate the permutations which avoid the two patterns 1342 and 1432. We begin by considering all permutations with the downfix 1 (i.e. all permutations). Our goal is to find elements of the downfix such that if an occurrence of a forbidden pattern uses the element, then there is a different occurrence of a forbidden pattern which does not use the element. When we find such an element, we can delete it, secure in the knowledge that the resulting permutation contains a forbidden pattern if and only if the original one did. These elements are called \emph{reversely deletable} (in \cite{ZSchemes}) or \emph{ES-reducible} (in \cite{VSchemes}). We follow the more recent source and call them ES-reducible. When a downfix has an ES-reducible element, we will also call the downfix itself ES-reducible. When a downfix is not ES-reducible, we say that it is \emph{ES-irreducible}.

One can easily find permutations with the downfix 1 which contain either the pattern 1342 or 1432 but which contain neither pattern when the 1 is removed (for example, 1342 itself is such a permutation). Therefore, we consider the \emph{refinements} of 1, that is all the length-2 permutations for which 1 is a downfix. These refinements are 12 and 21. Looking at 21, we see that the second element is ES-reducible; if any permutation uses the 1 in an occurrance of 1342 or 1432, the pattern could just as easily be formed using the 2 instead. Turning to 12, though, we find that neither element is ES-reducible; indeed we can form forbidden subpatterns which use both of them.

We are saved, though, by considering gap vectors. If a permutation has at least two elements between the 1 and the 2 (i.e. if its gap vector corresponding to 12 satisfies $[0,2,0]$) , then either a 1342 or 1432 pattern must occur. On, the other hand, if it has fewer than two elements between the 1 and the 2 (i.e. if the gap vector fails to satisfy $[0,2,0]$), then 2 is ES-reducible. Therefore, we either replace 12 with a shorter downfix, or else we can ignore it entirely.

In Section \ref{details}, we explain precisely how a computer could record these rules, and how it would use them to generate terms of the permutation class' enumeration sequence.
\end{eg}

\subsection{Insertion Encoding} \label{insenc}
The insertion encoding encodes a process in which a permutation is built up by inserting its elements from smallest to largest, but only in designated slots. These slots, designated by $\diamond$, are the only places in which a new element can be added, and they must end up containing an element. Each permutation is constructed by a unique sequence of insertions. The following example shows how 24513 is constructed.
\begin{eg}
\[ \diamond\]
\[\diamond 1 \diamond\]
\[2\diamond 1 \diamond \]
\[2\diamond 13\]
\[24\diamond 13\]
\[24513\]
\end{eg}

Albert et al. in \cite{Ins} describe this sequence by recording at each step which slot an element is inserted into, numbering them beginning with 1 on the left. They also record where in its slot each new element is added. An $m$ represents inserting an element in the middle of a slot (leaving $\diamond$s on both sides), an $r$ represents inserting it on the right of a slot (leaving a $\diamond$ on its left), an $l$ represents inserting it on the left of a slot (leaving a $\diamond$ on the right), and an $f$ represents filling the slot (leaving no $\diamond$ at all). So, 24513 is recorded as $m_1l_1f_2l_1f_1$. 

When the strings that represent valid permutations in the avoidance class form a regular language, we say the insertion encoding is regular, and the avoidance class can be efficiently enumerated. The authors were able to precisely characterize classes with regular insertion encodings: they are the classes which contain only finitely many \emph{vertical alternations}, i.e. permutations where each odd element is greater than each even element or vice-versa. Later, Vatter showed in \cite{VIns} that the these classes could also be characterized as those for which any sufficiently long partial permutation has an insertion encoding reducible element, i.e. an element which can be removed without affecting the set of insertion sequences that could finish the permutation.
 
 \section{Flexible Schemes} \label{flexible}
 \subsection{Motivation}
 In this section, we introduce a new idea to extend traditional enumeration schemes and enable them to count many more avoidance classes. It is motivated by the following question: What if there is a downfix and gap condition which do not guarantee that all permutations with that downfix and satisfying that condition contain a forbidden pattern, but which do allow some element of the downfix to be deleted?
  
If a downfix has such an element for every possible gap vector, then we always are able to reduce it to a simpler downfix. Of course, this is not very useful if it is hard to tell, for a given gap vector, which element is the reducible one. However, if a downfix contains such an element for every possible gap vector, and if we can determine which element that is by comparing the gap vector to finitely many gap conditions, then we can efficiently reduce the downfix. Because we allow the element being reduced to change based on the gap vector, we call such a downfix \emph{Flexible Scheme reducible} (or FS-reducible). 
   
The following example shows how this lets us count the avoidance class $\Av(1423,2314)$.
\begin{eg}\label{1423_2314}
Note that a (moderately) quick calculation with Vatter's Maple package \texttt{WILFPLUS} reveals that 3214 and 4321 are both irreducible using traditional schemes. In fact, $k\dots21$ is ES-irreducible for all $k$, and no finite scheme exists.

However, 321 is FS-reducible, and thus so are its refinements 3214 and 4321. Suppose that $\mathbf{g}$ is a gap vector satisfying the condition [0,1,0,0], i.e. $\mathbf{g}$ has length 4 and $g_2 \ge 1$. It is not true that any permutation with prefix 321 and gap vector $\mathbf{g}$ must contain a forbidden pattern; for instance, $\mathbf{g} = [0,1,0,0]$ yields the permutation 3421. It is true, however, that as long as this gap condition is satisfied, $Z(\{1423, 2314\}, 321, [g_1, g_2, g_3, g_4]) = Z(\{1342, 3124\}, d_3(321), d_3([g_1, g_2, g_3+g_4]))$.

To see this, suppose that a permutation $\s$ has downfix 321 and gap vector $[g_1, g_2, g_3, g_4]$ (thus, $\s_{g_1+1} = 3, \s_{g_1+g_2 +2} =2,$ and $\s_{g_1+g_2+g_3+3} = 3$). By way of contradiction, suppose that $g_2 \ge 1$, but the 1 in $\s$ is not FS-reducible. In other words $\s$ contains either a 1423 or 2314 pattern when the 1 is present, but contains no forbidden pattern when it is removed. Suppose that the 1 participates in a 1423 pattern. When the 1 is removed, either the 2 or 3 can fill in for it in the 1423 pattern, so a forbidden pattern still occurs.

The more difficult case is if 1 participates in a 2314 pattern. Obviously, 1 serves as the 1 in this pattern. We consider three subcases based on which element of $\s$ serves as the 2. First, suppose that 2 serves as the 2. To complete the pattern, we find $\s_i, \s_j$ with $3 < \s_i < \s_j$ such that $ g_1+g_2+2 < i < g_1+g_2+g_3 + 3 < j$. Choose $k$ such that $g_1 + 1 < k < g_1+g_2 +2$ (we know this is possible since $g_2 \ge 1$). If $\s_k < \s_j$, then $3\s_k 2\s_j$ form a 2314 pattern, while if $\s_k > \s_j$, then $3\s_k \s_i \s_j$ for a 1423 pattern.

In the second case, some $\s_l$ with $l \le g_1+1$ serves as the 2. As before, complete the pattern, this time by finding $\s_i, \s_j$ such that  $l < i < g_1+g_2 + g_3 + 3 < j$. If $i < g_1+g_2+2$ (in other words, if $\s_i$ occurs before 2) then $\s_l\s_i2\s_j$ is a 2314 pattern. Otherwise, $ g_1+g_2+2 < i$, and we are back in case 1. 

In the third case, some $\s_l$ with $g_1+1 < l < g_1+g_2+g_3+3$ serves as the 2. Complete the pattern by finding $\s_i, \s_j$ with $l < \s_i < g_1+g_2+g_3+3 < \s_j$, and note that $3\s_i 1\s_j$ is also a 2314 pattern and so we are back in case 2.

We have now dealt with the difficult case when $g_2 \ge 1$, and we are left with the easier case when $g_2 = 0$. Let $\s$ be a permutation with a 321 downfix and gap vector $[g_1,0,g_3,g_4]$. Since $2$ and $3$ are consecutive both in terms of their position in $\s$ and their values, but neither 1423 nor 2314 contain decreasing elements which are consecutive in both senses, it follows that no occurrence of a forbidden pattern uses both $2$ and $3$. Further, 3 can be replaced in any forbidden pattern with 2 (or vice-versa) and so 3 is deletable.

\end{eg}

As the previous example shows, verifying one of these reducibility rules by hand is largely a matter of making simple arguments for many tedious special cases. We will see in Section \ref{find_scheme} how a computer can do all this work for us.

\subsection{Counting with Schemes} \label{details}

In this section, we describe flexible schemes from the perspective of a computer, beginning with the data structure in which they are stored and then showing that a scheme allows for polynomial-time enumeration of a permutation class.

A scheme is a collection of \textit{replacement rules} which allow any sufficiently long permutation downfix to be replaced by a shorter one. As much as possible, we model these rules after Zeilberger's \textit{VZ-triples} defined in \cite{VZSchemes}.
 
A replacement rule is a pair $[\pi, \mathbf{H}]$, where $\pi$ is a downfix and $\mathbf{H} = \big[[\mathbf{h_1}, r_1],\dots,[\mathbf{h_k},r_k]\big]$ is a list of pairs, each consisting of a gap condition and the index of an element which can be deleted if that condition is satisfied and all previous conditions are not satisfied \footnote{For performance reasons, this is not exactly how they are implemented in the Maple package \texttt{Flexible\_Scheme}, but it is a theoretically simpler description and is equivalent in that \texttt{Flexible\_Scheme} can find a reduction rule for a certain downfix if and only if that downfix is FS-reducible in this sense.}.

\begin{eg}
Suppose we are trying to avoid the pattern 123. We obtain the scheme $\bigg\{\Big[[1],\big[\big]\Big],\Big[[1,2],\big[[[0,0,1],0],[[0,0,0],2]\big]\Big],\Big[[2,1],\big[[[0,0,0],1]\big]\Big]\bigg\}.$ 

This scheme is interpreted as follows. If a permutation has the downfix 1, we do not yet know how to reduce it ($\mathbf{H}=[]$ is a convention indicating that no reduction is possible yet). Since we can't reduce it, we must consider both refinements of 1: 12 and 21.  If a permutation has the downfix 12, we check to see if it satisfies the gap condition [0,0,1], i.e. if it has at least one element following the 2. If so, the permutation cannot possibly avoid 123 (this is indicated by the $0$ following the gap condition). If not, we check to see if it satisfies the gap condition [0,0,0]; since every permutation always does, we find that the second element of the downfix is FS-reducible. Finally, if a permutation has the downfix 21, we check to see if it satisfies the gap condition [0,0,0]; again it must, and so the first element of the downfix is FS-reducible.
\end{eg}

We call the algorithm which follows these rules \texttt{FindTerm}. This algorithm inputs a scheme $S$, a downfix $\pi$, and a gap vector $\mathbf{g}$, and it outputs the number of elements in $Y(\pi, \mathbf{g})$ which are in the permutation class that $S$ describes. Pseudocode for \texttt{FindTerm} is given in Algorithm \ref{FT}.

\begin{algorithm}
\caption{\texttt{FindTerm}\label{FT}}
	\SetKwFunction{Find}{Find}
	\SetKwFunction{FindFirst}{FindFirst}
	\SetKwFunction{Break}{break}
	\SetKwFunction{Len}{length}
	\SetKwFunction{Return}{return}
	
	\Find $r \in S$ such that $r[1] = \pi$\\
	$\mathbf{H}:=r[2]$ \\
	\If{$\mathbf{H} = []$}{
		\If{ $\mathbf{g} = [0,0,\dots, 0]$}{
			\Return(1)
		}
		$output := 0$ \\
		\For{$i := 1$ \KwTo \Len$(\mathbf{g})+1$}{
			\For{$j := 0$ \KwTo $\mathbf{g}[i]-1$}{
				$output\text{ += }(\texttt{FindTerm}(S, f_i(\pi), f_{i,j}(\mathbf{g})))$
			}
		}
		\Return($output$)
	}
	\FindFirst $\mathbf{h} \in \mathbf{H}$ such that $\mathbf{h}[1] \preceq \mathbf{g}$\\
	$i := \mathbf{h}[2]$\\
	\If{i = 0}{
		\Return(0)
	}
	\Else{
		\Return$(\texttt{FindTerm}(S, d_i(\pi), d_i(\mathbf{g})))$
	}
\end{algorithm}

Given a scheme $S$ for a permutation class, we find the number of permutations of length $n$ which avoid it as $\texttt{FindTerm}(S, [], [n])$. We claim that the runtime is polynomial in $n$.

\begin{prop} Let $S$ be a scheme with depth $d$ and suppose that each rule has at most $t$ gap conditions. Then, $\texttt{FindTerm}(S,[],[n])$ runs in $O(n^{d+2})$ time. 
\end{prop}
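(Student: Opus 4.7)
The plan is to treat $\texttt{FindTerm}$ as a dynamic program over the state $(\pi, \mathbf{g})$: bound the number of reachable states, bound the per-state work, and multiply. A preliminary observation I would establish first is that the quantity $|\pi|+\sum_i g_i$ is preserved by case (a) of the algorithm (a refinement moves one unit from the gap vector into the downfix) and decreases by $1$ under case (b); since the initial call has $|\pi|+\sum_i g_i = n$, this quantity, and in particular $\sum_i g_i$, never exceeds $n$ at any point in the recursion.

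First I would bound the state count. The downfix $\pi$ must appear in $S$, of which there are only $O(1)$ choices. For fixed $\pi$ with $|\pi| = k \le d$, the gap vector has $k+1$ nonnegative integer entries summing to at most $n$, giving $O(n^{k+1}) = O(n^{d+1})$ possibilities by stars-and-bars. Next I would bound the per-state work. In case (b), one scans at most $t$ gap conditions, each a vector of length at most $d+1$, doing $O(td)$ work before making a single recursive subcall. In case (a), one enumerates $(i,j)$-pairs with $\sum_i g_i \le n$ values and does $O(d)$ constructive work per pair to form $f_i(\pi)$ and $f_{i,j}(\mathbf{g})$, for $O(n)$ work in total. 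Multiplying the state count by the per-state work gives $O(n^{d+1}) \cdot O(n) = O(n^{d+2})$.

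The main obstacle will be that the displayed pseudocode does not explicitly memoize, so in principle the same pair $(\pi,\mathbf{g})$ could be expanded along many different recursion paths and the naive call tree could balloon well beyond the state-count bound above. I would address this by observing that $\texttt{FindTerm}$ is a pure function of its inputs, so caching its return value on the key $(\pi,\mathbf{g})$ preserves correctness; this optimization is consistent with the performance-oriented implementation alluded to in the paper's footnote, and it makes the state-count accounting tight so that the analysis above yields exactly the claimed $O(n^{d+2})$ bound.
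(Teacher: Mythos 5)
Your proof is correct and follows essentially the same route as the paper's: bound the number of distinct states $(\pi,\mathbf{g})$ by $O(n^{d+1})$ and multiply by the $O(n)$ work per call. You are in fact more careful than the paper on one point — the paper simply asserts that \texttt{FindTerm} is "called at most $d!\,n^{d+1}$ times," which implicitly equates the number of calls with the number of states, whereas you explicitly note that this requires memoizing on $(\pi,\mathbf{g})$.
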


\begin{proof}
Note that in every recursive call, $\mathbf{g}$ has no more than $d+1$ elements, each of which is in $[0,n-1]$. Similarly, $\pi$ has no more than $d$ elements. Based on these (extremely rough) bounds, we conclude that we need to call \texttt{FindTerm} recursively at most $d!n^{d+1}$ times. Within each call, if $\mathbf{H} = []$, we loop first over all $i \in [1, \texttt{length}(\mathbf{g})+1]$ ($\le d+2$ values), and then over all $j \in \mathbf{g}[i]$ ($\le n$ values). On the other hand, if $\mathbf{H} \neq []$, we need to look through $ \le t$ possible $\mathbf{h}$s (making $\le n$ comparisons each time) to find the first one with $\mathbf{h}[1] \preceq \mathbf{g}$, and then make a single recursive call.
\end{proof}

\subsection{Automatic Scheme Discovery} \label{find_scheme}

Of course, a scheme is only useful if we can find it in the first place. Given some downfix $\pi$, the idea is to test every possible gap condition $\mathbf{h}$ to see if it either guarantees a forbidden pattern or has some element $r$ which is FS-reducible for all gap vectors satisfying $\mathbf{h}$. Once we find such an $\mathbf{h}$, we know how to reduce $\pi$ whenever its gap vector satisfies $\mathbf{h}$, so for future $\mathbf{h'}$, we only need to find $r$ which is FS-reducible for all gap vectors satisfying $\mathbf{h'}$ and failing to satisfy $\mathbf{h}$ (or have $\mathbf{h'}$ guarantee a forbidden pattern). The following proposition, based on Proposition 6.2 from \cite{VSchemes}, lets a computer verify that $r$ is FS-reducible for all gap vectors satisfying $\mathbf{h}$ and not satisfying $\mathbf{h_1},\mathbf{h_2},\dots, \mathbf{h_k}$ by checking only a finite number of gap vectors. Note that $||B||_\infty$ is the length of the largest element of $B$ and $||\mathbf{g}||_1$ is the sum of the elements of $\mathbf{g}$.

\begin{prop}\label{check} Let $B$ be a set of forbidden patterns, $\pi$ be a downfix, and $\mathbf{h}, \mathbf{h_1}, \mathbf{h_2}, \dots, \mathbf{h_k}$ be gap conditions. Suppose that 
 \[|Z(B; \pi; \mathbf{g})| = |Z(B;d_r(\pi) ; d_r(\mathbf{g}))|\]
 for all $\mathbf{g}$ with $||\mathbf{g}||_1 \le ||B||_\infty -1 + ||\mathbf{h}||_1$ which satisfy $\mathbf{h}$ but fail to satisfy any of $\mathbf{h_1}, \mathbf{h_2}, \dots, \mathbf{h_k}$. Then the equality holds for all $\mathbf{g}$ which satisfy $\mathbf{h}$ but fail to satisfy any of $\mathbf{h_1}, \mathbf{h_2}, \dots, \mathbf{h_k}$.
 \end{prop}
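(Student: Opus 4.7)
\emph{Plan.} The key observation is that $d_r$ extends naturally to permutations: for $\sigma \in Y(\pi, \mathbf{g})$, let $d_r(\sigma)$ be the reduction of $\sigma$ with the value $r$ removed. Reinserting $r$ at the slot prescribed by $\pi$ inverts this, so $d_r$ is a bijection $Y(\pi, \mathbf{g}) \to Y(d_r(\pi), d_r(\mathbf{g}))$. Because removing an element from a permutation can only destroy occurrences of forbidden patterns, $d_r$ carries $Z(B; \pi; \mathbf{g})$ into $Z(B; d_r(\pi); d_r(\mathbf{g}))$, and so the inequality $|Z(B; \pi; \mathbf{g})| \le |Z(B; d_r(\pi); d_r(\mathbf{g}))|$ is automatic. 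The desired equality is equivalent to the converse preservation statement: whenever $\sigma \in Y(\pi, \mathbf{g})$ contains some $p \in B$, so does $d_r(\sigma)$.

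I would prove this converse directly, by shrinking any witness down to one covered by the hypothesis. Fix such $\mathbf{g}$ and $\sigma$, and let $\nu_i$ be the number of positions of the chosen occurrence of $p$ that lie in gap $i$. If the occurrence avoids the $r$-th downfix value then $p$ survives in $d_r(\sigma)$ and we are done, so assume it uses that value. Construct $P \subseteq [n]$ by taking all $k$ downfix positions of $\sigma$, all positions of the $p$-occurrence, and, for each gap $i$, further non-downfix positions of that gap so that $P$ contains exactly $\max(\nu_i, h_i)$ non-downfix positions in gap $i$. This is feasible because $g_i \ge h_i$ (from $\mathbf{g} \succeq \mathbf{h}$) and $g_i \ge \nu_i$ (trivially), so $g_i \ge \max(\nu_i, h_i)$.

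Now let $\tau$ be the reduction of $\sigma$ restricted to $P$. Then $\tau$ has downfix $\pi$ and gap vector $\mathbf{g}^*$ with $g^*_i = \max(\nu_i, h_i)$, so $\mathbf{g}^* \succeq \mathbf{h}$ by construction, and $\mathbf{g}^* \preceq \mathbf{g}$ componentwise, so $\mathbf{g}^*$ fails each $\mathbf{h_j}$ at whichever coordinate $\mathbf{g}$ does. Moreover
\[ ||\mathbf{g}^*||_1 = \sum_i \max(\nu_i, h_i) \le \sum_i \nu_i + \sum_i h_i \le (||B||_\infty - 1) + ||\mathbf{h}||_1, \]
where $\sum_i \nu_i \le |p| - 1 \le ||B||_\infty - 1$ because $p$ spends one of its positions on the $r$-th downfix element. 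Thus $\mathbf{g}^*$ lies in the scope of the hypothesis, so by assumption $d_r(\tau)$ contains some element of $B$; since $d_r(\tau)$ is a sub-pattern of $d_r(\sigma)$, the latter contains an element of $B$ as well.

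The main obstacle is engineering $P$ so that all three base-case requirements on $\mathbf{g}^*$ hold at once: $\mathbf{g}^* \succeq \mathbf{h}$, $\mathbf{g}^* \not\succeq \mathbf{h_j}$ for each $j$, and $||\mathbf{g}^*||_1 \le ||B||_\infty - 1 + ||\mathbf{h}||_1$. The ``$-1$'' in the size bound is tight: it is exactly what forces the preliminary reduction to the case where $p$ uses the $r$-th downfix element, since otherwise $\sum_i \nu_i$ could be as large as $||B||_\infty$ and $\mathbf{g}^*$ would fall outside the hypothesis by one.
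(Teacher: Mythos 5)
Your proposal is correct and follows essentially the same route as the paper: establish $|Z(B;\pi;\mathbf{g})| \le |Z(B;d_r(\pi);d_r(\mathbf{g}))|$ via the deletion injection, then handle the reverse direction by restricting a pattern-containing permutation to its downfix, the chosen occurrence, and just enough extra entries to keep the gap vector $\succeq \mathbf{h}$, landing in a gap vector of $l_1$-norm at most $||B||_\infty - 1 + ||\mathbf{h}||_1$ that still fails every $\mathbf{h_j}$. Your explicit choice of $g^*_i = \max(\nu_i, h_i)$ is a slightly cleaner bookkeeping of the same shrinking step the paper performs.
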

 \begin{proof}
 It is clear that $|Z(B; \pi; \mathbf{g})| \le |Z(B;d_r(\pi) ; d_r(\mathbf{g}))|$ always holds since each permutation in $Z(B; \pi; \mathbf{g})$ can have the downfix element $r$ removed to yield a distinct permutation in $Z(B;d_r(\pi) ; d_r(\mathbf{g}))$.
 
To see that $ \ge$ also holds, fix $\mathbf{g}$ satsifying $\mathbf{h}$ but not $\mathbf{h_1}, \mathbf{h_2}, \dots, \mathbf{h_k}$. Our strategy is to show that if there is any $\s \not\in Z(B;\pi;\mathbf{g})$ such that removing the downfix element $r$ gives a permutation in $Z(B;d_r(\pi);d_r(\mathbf{g}))$, then there is such a $\s$ corresponding to a small gap vector.

Consider a permutation $\s\in Y(\pi, \mathbf{g})$ but not in $Z(B; \pi; \mathbf{g})$, and suppose that removing the $r$ in the downfix of $\s$ eliminates the forbidden pattern so that the resulting permutation is in $Z(B;d_r(\pi) ; d_r(\mathbf{g}))$. Choose an occurrence of a pattern $b \in B$ in $\s$, and suppose it uses the elements $\s_{i_1},\s_{i_2},\dots,\s_{i_k}$ (where $k \le ||B||_\infty$). Form $\s'$ by removing from $\s$ all the elements which do not participate in this pattern; since at least one element from the pattern ($r$) came from the downfix, we have at most $||B||_\infty-1$ elements not in the downfix.
 
Up until this point, the proof has been identical to the proof of Proposition 6.2 in \cite{VSchemes}, but now we need to ensure that $\s'$ has a gap vector $\mathbf{g'} \succeq \mathbf{h}$. This can be done by replacing the elements of $\s$ which ensured that $\mathbf{g} \succeq \mathbf{h}$; in the worst case we have removed all the elements of $\s$ which lay in gaps with positive minimum sizes, and so we need to replace $||\mathbf{h}||_1$ elements. Altogether, $\s'$ has $\le ||B||_\infty -1 + ||\mathbf{h}||_1$ elements following its prefix.

To complete the proof, we note that by construction $\s'$ has gap vector $\mathbf{g'}$ satisfying $||\mathbf{g'}||_1 \le ||B||_\infty -1 + ||\mathbf{h}||_1$. By construction, $\mathbf{g'}$ satisfies $\mathbf{h}$, and since $\mathbf{g'} \preceq \mathbf{g}$, it also fails to satisfy any of $\mathbf{h_1}, \mathbf{h_2}, \dots, \mathbf{h_k}$. Finally, when the $r$ in the prefix is removed, $\s'$ ceases to contain any pattern of $B$.
 \end{proof}
 
 We conclude this section with a caveat regarding the performance of flexible schemes. In the next section we will show that flexible schemes provide polynomial-time enumeration for any permutation class with a finite traditional scheme or a regular insertion encoding, and many other classes besides. However, there is no free lunch. Compared with regular insertion encodings, enumeration is much slower because flexible schemes simply do not recognize the underlying C-finite structure of the enumeration sequences they produce. As a result, instead of enumeration in linear time, we have to settle for enumeration in $O(n^d)$ time. Compared to traditional schemes, meanwhile, flexible schemes may require much longer gap conditions, and so may be more difficult to build. With traditional schemes, we need only consider gap conditions of size $\le ||B||_\infty -1$, and so it is reasonable to simply try all possible gap conditions. Here, we may need much longer gap conditions, and so we must impose an artificial limit on the ones we will consider. In practice, though, this seems to be only a minor disadvantage (see Section \ref{results}).
  
\section{Sufficient Conditions for Flexible Schemes}\label{conditions}
For some automatic enumeration algorithms, we know precisely when they will succeed. As noted earlier, permutation classes have regular insertion encodings if and only if they contain finitely many vertical alternations. Enumeration schemes have proven to be more difficult to analyze, however. For all but a handful of special cases, we can only conclude that a finite enumeration scheme exists when we find one, and we can only conjecture that one does not exist when we do a lot of work and still fail to find one. In this section, we prove that finite flexible schemes exist whenever a finite traditional scheme or regular insertion encoding does.
 
First, we show that every downfix which is ES-reducible is also FS-reducible. As defined in \cite{VSchemes}, a downfix is ES-reducible if and only if there exists $r$ such that for all $\mathbf{g}$ either $|Z(B, \pi, \mathbf{g})| = 0$ or $|Z(B, \pi, \mathbf{g})| = |Z(B, d_r(\pi), d_r(\mathbf{g}))|$.

A downfix is FS-reducible, meanwhile, if and only if there exists a finite list of gap-conditions $\mathbf{h_1}, \mathbf{h_2}, \dots, \mathbf{h_k}$ with $\mathbf{h_k}=[0,0,\dots,0]$ and a corresponding list of integers $r_1, r_2, \dots r_k$ (where each integer is in $\{0,1,\dots, |\pi|\}$) such that for all $\mathbf{g}$, if $i$ is chosen minimally so that $\mathbf{g} \succeq \mathbf{h_i}$, then
\[|Z(B,\pi, \mathbf{g})| = 
\begin{cases}
0 &\text{ if } r_i =0
\\
|Z(B,d_{r_i}(\pi), d_{r_i}(\mathbf{g}))| &\text{ otherwise}
\end{cases}
\]

We claim that if a downfix is ES-reducible, then that downfix is also FS-reducible. As noted in \cite{VSchemes}, the set $\{\mathbf{g}: |Z(B, \pi, \mathbf{g})| \neq 0\}$ is a lower order ideal in the lattice $\mathbf{N}^{|\pi|+1}$ and has a finite basis $\mathbf{h_1}, \mathbf{h_2}, \dots, \mathbf{h_k}$ such that $|Z(B, \pi, \mathbf{g})| = 0$ if and only if $\mathbf{g} \succeq\mathbf{h_i}$ for some $i$. Therefore, we can simply use these $\mathbf{h}$s along with $\mathbf{h_{k+1}} = [0,0,\dots,0]$ and $(r_i)_{i=1}^{k+1}$ where $r_i = 0$ for $i \le k$ and $r_{k+1} = r$ to fulfill the conditions for FS-reducibility.

Next, we show that a regular insertion encoding also guarantees a finite flexible scheme.

\begin{thm}
Let $B$ be a set of forbidden patterns, and suppose the class of permutations avoiding $B$ has a regular insertion encoding. Then, that class also has a finite flexible scheme.\end{thm}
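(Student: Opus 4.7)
The plan is to leverage Vatter's characterization of regular insertion encodings from \cite{VIns}: the class $\Av(B)$ has a regular insertion encoding iff there is some bound $N$ such that every sufficiently long reachable partial permutation admits an insertion-encoding-reducible (IE-reducible) element. First I set up a correspondence between insertion-encoding states and the data $(\pi, \mathbf{g})$ tracked by a flexible scheme. After the values $1, 2, \ldots, k$ of a final permutation $\s$ are inserted, the automaton state is captured by the pair $(\pi, S)$ where $\pi$ is the length-$k$ downfix of $\s$ and $S \subseteq \{1, 2, \ldots, k+1\}$ is the set of gaps between downfix elements that will receive some larger element. Since $S = \{i : g_i > 0\}$ where $\mathbf{g}$ is the length-$k$ gap vector, all $(\pi, \mathbf{g})$ with the same support correspond to the same insertion-encoding state.

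The second step is to promote IE-reducibility to the counting equality that a flexible-scheme rule requires. Suppose $r$ is IE-reducible at state $(\pi, S)$; I will show $|Z(B, \pi, \mathbf{g})| = |Z(B, d_r(\pi), d_r(\mathbf{g}))|$ for every $\mathbf{g}$ with $\{i : g_i > 0\} = S$. The map $\s \mapsto \s'$ that deletes the element of value $\pi_r$ from $\s$ and reduces is always an injection from $Z(B, \pi, \mathbf{g})$ into $Z(B, d_r(\pi), d_r(\mathbf{g}))$, since $g_r$ records where to reinsert inside the merged gap of $\s'$. For surjectivity, I would unpack Vatter's definition of IE-reducibility: the insertion-encoding suffix that completes $\s'$ from state $(d_r(\pi), d_r(S))$ can, after a straightforward slot renumbering to account for the merging of gaps $r$ and $r+1$, be executed verbatim from state $(\pi, S)$, and IE-reducibility guarantees that the resulting longer permutation still lies in $\Av(B)$. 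The reinserted element lands in the unique spot that produces gap vector $\mathbf{g}$, so the preimage is in $Z(B, \pi, \mathbf{g})$.

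The third step is to encode the $2^{|\pi|+1}$ possible supports as the ordered list of gap conditions demanded by the scheme format. For each $S \subseteq \{1, 2, \ldots, |\pi|+1\}$ define $\mathbf{h}_S$ by $(\mathbf{h}_S)_i = 1$ if $i \in S$ and $0$ otherwise, so that $\mathbf{g} \succeq \mathbf{h}_S$ iff $S \subseteq \{i : g_i > 0\}$. Order the collection $\{\mathbf{h}_S\}$ by any linear extension of reverse inclusion on the $S$'s, ending with $\mathbf{h}_\es = [0, 0, \ldots, 0]$. Because the maximal $S$ contained in the support of $\mathbf{g}$ is the support itself, the first condition satisfied is always the one matching the true support of $\mathbf{g}$, firing the IE-reducible element that applies to that particular slot structure. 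For any $S$ such that $(\pi, S)$ is unreachable (whence $Z = \es$ for all such $\mathbf{g}$), I instead pair $\mathbf{h}_S$ with $r = 0$, which returns the correct count of $0$.

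Assembling everything, the scheme includes a rule $[\pi, \mathbf{H}]$ for every downfix $\pi$ of length at most $N$: if $|\pi| < N$ set $\mathbf{H} = []$ so that \texttt{FindTerm} recurses on refinements, while if $|\pi| = N$ set $\mathbf{H}$ to be the list constructed in the third step, which is well-defined by Vatter's theorem applied to each reachable $(\pi, S)$. The scheme is finite since there are only $\sum_{k=0}^N k!$ downfixes of length at most $N$. The main obstacle is the surjectivity step in paragraph two: one must carefully match the insertion-encoding suffixes across the state change $(\pi, S) \to (d_r(\pi), d_r(S))$ and confirm that the precise form of IE-reducibility in \cite{VIns} guarantees that the reinsertion really does preserve membership in $\Av(B)$ with the intended gap vector.
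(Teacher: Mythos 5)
Your route is genuinely different from the paper's. The paper never touches insertion-encoding reducibility: it uses the other half of Albert--Linton--Ru\v{s}kuc's characterization (finitely many vertical alternations), deduces that every viable gap vector has at most $k$ positive entries, and then runs a pigeonhole argument --- via Proposition \ref{check}, an element that fails to be FS-reducible for the gap vectors reaching a given condition must be ``witnessed'' by one of finitely many short permutations, each witnessing at most $\|B\|_\infty$ elements, so a long enough downfix has an unwitnessed, hence reducible, element. Your plan instead tries to transport Vatter's IE-reducible element directly into a flexible-scheme rule. The overall architecture (states $\leftrightarrow$ pairs $(\pi,S)$ with $S$ the support of the gap vector, conditions $\mathbf{h}_S$ ordered by a linear extension of reverse inclusion so that the first condition fired identifies the exact support, $r=0$ for nonviable supports, nontrivial rules only at depth $N$) is sound and would yield a finite scheme if the transfer step held.

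But that transfer step --- IE-reducibility of $r$ at $(\pi,S)$ implies $|Z(B,\pi,\mathbf{g})| = |Z(B,d_r(\pi),d_r(\mathbf{g}))|$ for every $\mathbf{g}$ with support $S$ --- is exactly where the proof is missing, and your sketch of it does not work as stated. You propose to handle ``the merging of gaps $r$ and $r+1$'' by ``a straightforward slot renumbering,'' but if gaps $r$ and $r+1$ both carry slots, removing $\pi_r$ changes the number of slots, so a completion suffix of the reduced configuration cannot be executed verbatim on the original one; worse, elements destined for the merged slot could have sat in either of the two original slots, so the correspondence does not respect the gap vector and the claimed bijection onto $Z(B,\pi,\mathbf{g})$ breaks down. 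The fix is to first prove that an IE-reducible element of a viable configuration never separates two slots (configurations with different slot counts have disjoint completion languages, so they cannot be equivalent unless both are dead); only then is $d_r$ injective on gap vectors of support $S$ and only then does the word-for-word identification of completions give the counting equality. You flag this as ``the main obstacle,'' which is honest, but it is the entire content of the theorem on your approach, so as written the argument is incomplete. You would also need to confirm that Vatter's theorem gives a uniform bound $N$ beyond which every reachable configuration is reducible, which your construction assumes.
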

\begin{proof}
Recall that the permutation classes with regular insertion encodings are exactly those with finitely many vertical alternations. For such a class, we can find $k$ such that no vertical alternation in the class is longer than $2k$, and so, for every downfix $\pi$, any gap vector $\mathbf{g}$ with $k+1$ positive entries is nonviable (because every $\s \in Y(\pi, \mathbf{g})$ contains as a subsequence a vertical alternation of length $2k+1$). 

Fix some sufficiently long $\pi$. We first show that every gap condition $\mathbf{h}$ consisting of $k$ 1 entries and $(\text{length}(\mathbf{h}) - k)$ 0 entries has an element $r$ which is FS-reducible for every $\mathbf{g}$ satisfying $\mathbf{h}$. Then, for every gap condition $\mathbf{h'}$ consisting of $(k-1)$ 1 entries and $(\text{length}(\mathbf{h}) - k +1)$ 0 entries, we find an $r$ which is FS-reducible for every $\mathbf{g}$ satisfying $\mathbf{h'}$ and not satisfying any of the earlier $\mathbf{h}$ with $k$ 1 entries. We continue in this way until we have found an $r$ which is FS-reducible for every $\mathbf{g}$ satisfying $[0,0,\dots,0]$ but not satisfying any $\mathbf{h}$ which contains a 1. Every $\mathbf{g}$ with $j$ positive entries satisfies an $\mathbf{h}$ with $j$ 1 entries but not any $\mathbf{h}$ with more than $j$ 1 entries, and so this will show that $\pi$ is FS-reducible.

Let $\pi$ be a downfix and $\mathbf{h}$ be a gap condition with $k$ ones. Consider some element $i$ of $\pi$, and suppose it is not FS-reducible. Then, there exists some $\s$ with downfix $\pi$ and gap vector $\mathbf{g} \succeq \mathbf{h}$ which contains a pattern of $B$, but which does not contain such a pattern when $i$ is removed. Furthermore, by Proposition \ref{check} we can assume that $||\mathbf{g}||_1 \le ||B||_\infty - 1 + k$. Following Vatter in \cite{VIns}, we say that $\s$ witnesses $i$. Note that $i$ is present in every occurrence in $\s$ of every pattern of $B$, and so $\s$ can witness at most $||B||_\infty$ elements $i$.

Now, we just need to show that only finitely many $\s$s can witness elements. Since $\mathbf{g} \succeq \mathbf{h}$, all entries of $\mathbf{g}$ are 0 except for the $k$ which $\mathbf{h}$ forced to be positive. Thus, there are finitely many possible $\mathbf{g}$s which can provide witnesses. For some fixed $\mathbf{g}$ there are $||\mathbf{g}||_1!$ possible permutations $\s$ with downfix $\pi$ and gap vector $\mathbf{g}$. Therefore, only a fixed number of downfix elements can be witnessed, so, if $\pi$ is large enough, there is a downfix element which is not witnessed and hence is FS-reducible.

At this point, we have shown that for all sufficiently long $\pi$ and gap conditions $\mathbf{h}$ with $k$ ones, there exists an element $r$ which is FS-reducible for every viable $\mathbf{g} \succeq \mathbf{h}$. The proof is essentially the same for any $j < k$, but we will write it out anyway for the sake of completeness. 

Let $\pi$ be a downfix and $\mathbf{h}$ be a gap condition with $j$ ones for some $0 \le j < k$. Consider some element $i$ of $\pi$, and suppose it is not FS-reducible. Then, there exists some $\s$ which contains a pattern of $B$, but which does not contain such a pattern when $i$ is removed. This $\s$ has downfix $\pi$ and gap vector $\mathbf{g}$ satisfying $\mathbf{h}$ but not satisfying $\mathbf{h'}$ for any $\mathbf{h'}$ with more than $j$ positive entries. Again Proposition \ref{check} lets us assume that $||\mathbf{g}||_1 \le ||B||_\infty - 1 + j$. Also as before, $\s$ can witness at most $||B||_\infty$ elements $i$.

We know that all entries of $\mathbf{g}$ are 0 except for the $j$ which $\mathbf{h}$ forced to be positive (otherwise, $\mathbf{g}$ would satisfy some other $\mathbf{h'}$ with more 1s). Again, this means that only finitely many $\mathbf{g}$s can provide witnesses, and each $\mathbf{g}$ provides at most $||\mathbf{g}||_1!$ witnesses. Thus, if $\pi$ is large enough, there is a downfix element which is FS-reducible. As noted at the end of the second paragraph of this proof, that is enough to show that $\pi$ is FS-reducible. Since this is true for all sufficiently large $\pi$, a finite flexible scheme exists.
\end{proof}

\section{Empirical Results}\label{results}
We tried to find regular insertion encodings, traditional schemes, and flexible schemes for several different permutation classes. In particular we looked at the avoidance classes of pattern sets $B$ where $B$ consisted of either a single pattern of length 3, 4, or 5, a pair of patterns of length 3, a pair of patterns of length 4, or a pattern of length 4 and another of length 5.

For each of these possible pattern lengths, Table \ref{ER} shows how many classes have regular insertion encodings, how many have finite traditional schemes, how many have finite flexible schemes, and how many of those with finite flexible schemes did not have either a regular insertion encoding or a finite traditional scheme. For finding regular insertion encodings, we used Vatter's package \texttt{InsEnc} from \cite{VIns}, for finding traditional schemes we used Zeilberger's package \texttt{VATTER} from \cite{VZSchemes} and for finding flexible schemes we used our own \texttt{Flexible\_Scheme}. In about half of the cases, we also checked for traditional schemes using \texttt{WILFPLUS} from \cite{VSchemes}, and we plan to test the other half as well.

While we can safely conclude that we found regular insertion encodings whenever they exist, the same is not true of schemes. When finding traditional schemes using \texttt{WILFPLUS}, we only considered downfixes of length $\le 8$; it is conceivable that finite schemes exist, but simply require longer downfixes, and so we did not find them. When finding traditional schemes using \texttt{VATTER} and flexible schemes using \texttt{Flexible\_Scheme}, we only considered downfixes of length $\le 8$ and gap conditions with $l_1$ norm $\le 2$. In addition, if we could not determine whether an avoidance class had a finite scheme after 36 hours of computation, we recorded it as not having a scheme.

Because of these constraints, it is conceivable that either \texttt{InsEnc} or \texttt{WILFPLUS} could have found a regular insertion encoding or a finite scheme for some class that \texttt{Flexible\_Scheme} failed to find one for. In practice, this happened for three of the avoidance classes we considered (all of which were avoiding a length 4 and length 5 pattern). 

\begin{table}[h]\caption{Empirical Results}\label{ER}
\begin{tabular}{| c | c | c | c | c | c |}
\hline
Pat length\tablefootnote{$[n], [m]$ refers to classes avoiding one pattern of length $n$ and one of length $m$, $[n]$ refers to patterns avoiding one pattern of length $n$} & Sym Classes\tablefootnote{Every permutation class has up to 8 symmetries given by inverting, reversing, and complementing its patterns. Every class has the same enumeration sequence as its symmetries, so we really only care about the number of symmetry classes which can be enumerated, not the total number of permutation classes} & Ins. Enc. & ES & FS & New with FS \\ 
\hline
[3] & 2 & 0 & 2 & 2 & 0 \\
\hline
[4] & 7 & 0 & 2 & 2& 0\\
\hline
[5] & 23 & 0 & 2 & 2 & 0\\
\hline
[3], [3] & 5 & 5&5 &5 &0 \\
\hline
[4], [4] & 56 & 13 & 33 & 44 & 9 \\
\hline
[4], [5] & 434 & 30 & 112 & 173 & 59  \\
\hline
\end{tabular}
\end{table}

The permutation classes avoiding two length four permutations have been particularly well studied by previous authors. Generating functions are known for all but three of them, and these are conjectured not to have any D-finite generating functions in \cite{no_gf}. Two of these (avoiding \{4321, 4231\} and \{4312, 4123\}) have finite traditional schemes, and the remaining one (avoiding \{4231, 4123\}) has a finite flexible scheme. As indicated in Table \ref{ER}, 12 of the 56 symmetry classes lack finite flexible schemes as far as we can tell. These are the classes avoiding $\{1234, 3412\}$, $\{1324, 2143\}$, $\{1324, 3412\}$, $\{1324, 2341\}$, $\{1324, 4231\}$, $\{1324, 2413\}$, $\{1324, 2431\}$, $\{1342, 1423\}$, $\{1342, 2413\}$, $\{1432,2413\}$, $\{2143, 2413\},$ and $\{2413, 3142\}$. The interested reader can find links to the enumeration sequences and generating functions of all of these avoidance classes at \cite{wiki2x4}.

\section{Maple Package}
This paper is accompanied by the package \texttt{Flexible\_Scheme} available at the author's \href{http://math.rutgers.edu/~yb165/Flexible_Scheme/Flexible_Scheme.txt}{website}. This package has two main procedures. The first, HasScheme, inputs a set of permutations, a maximum depth of downfixes to consider, and a maximum $l_1$ norm of gap conditions to consider and outputs a scheme if one exists within the given constraints. The second function, SeqS, inputs a scheme and integer $n$, and outputs the first $n$ terms of the enumeration sequence given by that scheme. For more details, load the package in Maple and call the function Help with either of the two functions as the argument.

\section{Acknowledgments}
The author is grateful to his advisor Doron Zeilberger for his support and introduction to enumeration schemes. He would also like to thank Vince Vatter for introducing him to the insertion encoding and suggesting the problem of unifying the two techniques, as well as for several helpful comments on this paper.

 \bibliography{Generic_Bib}
 \bibliographystyle{abbrv}
 
 \end{document}